\theoremstyle{definition}
\newtheorem{definition}{Definition}[section]
\newtheorem*{acknowledgments}{Acknowledgments}
\theoremstyle{plain}
\newtheorem{proposition}[definition]{Proposition}
\newtheorem{lemma}[definition]{Lemma}
\newtheorem{theorem}[definition]{Theorem}
\newtheorem{problem}[definition]{Problem}
\newtheorem{convention}{Convention}
\newcommand{\Z}{\mathbb{Z}}
\newcommand{\Q}{\mathbb{Q}}
\newcommand{\K}{\mathbf{k}} 
\newcommand{\V}{\mathcal{V}} 
\newcommand{\W}{\mathcal{W}} 
\newcommand{\gr}{\mathrm{gr}} 
\newcommand{\F}{\mathcal{F}} 
\newcommand{\Betti}{\mathrm{B}} 
\newcommand{\DeRham}{\mathrm{DR}} 
\newlist{termlist}{enumerate}{1}
\setlist[termlist]{label=\textbf{Term \arabic*}, align=left, wide=1pt}
\author{Benjamin Enriquez}
\address{\scriptsize Institut de Recherche Mathématique Avancée (UMR 7501), Université de Strasbourg, 7 rue René Descartes, 67000 Strasbourg, France}
\email{b.enriquez@math.unistra.fr}
\author{Khalef Yaddaden}
\address{\scriptsize Graduate School of Mathematics, Nagoya University, Furo-cho, Chikusa-ku, Nagoya, 464-8602, Japan.}
\email{khalef.yaddaden.c8@math.nagoya-u.ac.jp}
\title[Compatibility of the Betti harmonic coproduct with cyclotomic filtrations]{On the compatibility of the Betti harmonic coproduct with cyclotomic filtrations}
\date{August 17, 2025.}
\subjclass[2020]{Primary 
11M32, 
16W70, 
16T10. 
Secondary 16T05. 
}
\begin{document}
    \begin{abstract}
        In \cite{Yad2}, the second author introduced a Betti counterpart of $N$-cyclotomic double shuffle theory for any $N \geq 1$. The construction is based on the group algebra of the free group $F_2$, endowed with a filtration relative to a morphism $F_2 \to \mu_N$ (where $\mu_N$ is the group of $N$-th roots of unity). One of the main results of \cite{Yad2} is the construction of a complete Hopf algebra coproduct $\widehat{\Delta}^{\mathcal{W}, \mathrm{B}}_N$ on the relative completion of a specific subalgebra $\mathcal{W}^\mathrm{B}$ of the group algebra of $F_2$. However, an explicit formula for this coproduct is missing.
        In this paper, we show that the discrete Betti harmonic coproduct $\Delta^{\mathcal{W}, \mathrm{B}}$ defined in \cite{EF1} for the classical case ($N=1$) by the first author and Furusho remains compatible with the filtration structure on $\mathcal{W}^\mathrm{B}$ induced by the relative completion for arbitrary $N$. This compatibility suggests that the completion corresponding to $\Delta^{\mathcal{W}, \mathrm{B}}$ is a candidate for an explicit realization of $\widehat{\Delta}^{\mathcal{W}, \mathrm{B}}_N$.
    \end{abstract}
    
    \maketitle
	
    {\footnotesize \tableofcontents}

    \section{Introduction}
    Throughout this paper, let $\K$ be a commutative $\Q$-algebra and $N$ be a positive integer. Denote by $\mu_N$ the group of complex $N$-th roots of unity with generator $\zeta_N := e^{\frac{\mathrm{i}2\pi}{N}}$. We will also use the following convention
    \begin{convention} \label{convention_star}
        For $\K$-submodules $A_1, \dots, A_k$ of a $\K$-algebra $A$ and positive integers $n_1, \dots, n_k$, we denote by $A_1^{n_1} \cdots A_k^{n_k}$ the image of the morphism $A_1^{\otimes n_1} \otimes \cdots \otimes A_k^{\otimes n_k} \to A$ induced by the product in $A$. In the expression $A_1^{n_1} \cdots A_k^{n_k}$, we write $A_j$ instead of $A_j^{n_j}$ whenever $n_i=1$ ($1 \leq j \leq k$).
    \end{convention}
    \subsection{Context and motivation}
    Cyclotomic multiple zeta values (CMZVs) are special values of multiple polylogarithms evaluated at roots of unity, defined by the convergent series:
    \begin{equation*}
        \mathrm{Li}_{(k_1, \dots, k_r)}(z_1, \dots, z_r) := \sum_{m_1 > \dots > m_r > 0} \frac{z_1^{m_1} \cdots z_r^{m_r}}{m_1^{k_1} \cdots m_r^{k_r}},
    \end{equation*}
    where \( r, k_1, \dots, k_r \in \Z_{>0} \) and \( z_1, \dots, z_r \in \mu_N \) with \( (k_1, z_1) \neq (1,1) \). These values arise as periods of the motivic fundamental groupoid of the cyclotomic punctured projective line $\mathbb{P}^1 \setminus \{0, \mu_N, \infty\}$ \cite{Del10, Gon05} and are related to associators, mixed Tate motives, and the Grothendieck-Teichmüller group. \newline
    From this perspective, the double shuffle relations among CMZVs --arising from series expansions and iterated integrals-- are encoded in Racinet's ``\emph{double mélange et régularisation} scheme'' $\mathsf{DMR}_N$ \cite{Rac02}, which is expressed in terms of a graded algebra $\V^\DeRham_N$, a graded subalgebra $\W^\DeRham_N$ of $\V^\DeRham_N$, and a Hopf algebra coproduct $\Delta^{\W, \DeRham}_N$ on $\W^\DeRham_N$. More specifically, this framework is built on the completions of these graded objects, that is, the topological algebra $\widehat{\V}^\DeRham_N$ and the complete Hopf algebra $(\widehat{\W}^\DeRham_N, \widehat{\Delta}^{\W, \DeRham}_N)$ \cite{Rac02, Yad1}.
    \\[2mm]
    A Betti analogue of this setting was developed by the second author in \cite{Yad2}, generalizing the work of the first author and Furusho in \cite{EF1} (for $N=1$), which in turn is inspired by the unpublished preprint of Deligne and Terasoma \cite{DeT}. Here, the key objects are a filtered algebra \( \V^\Betti_N \) and subalgebra \( \W^\Betti_N \) of \( \V^\Betti_N \); and the completion \( \widehat{\W}^\Betti_N \), equipped with a complete Hopf algebra coproduct \( \widehat{\Delta}^{\W, \Betti}_N \) --called the $N$-cyclotomic Betti harmonic coproduct-- whose defining property is the conjugation formula \cite[Theorem 3.2.4]{Yad2}
    \begin{equation}\label{eq:conjugation}
        \widehat{\Delta}^{\W, \Betti}_N = (\mathrm{comp}^\W_{\Phi,N} \otimes \mathrm{comp}^\W_{\Phi,N})^{-1} \circ \widehat{\Delta}^{\W, \DeRham}_N \circ \mathrm{comp}^\W_{\Phi,N},
    \end{equation}
    which is valid for any choice of $\Phi \in \mathsf{DMR}_N$; where \( \mathrm{comp}^\W_{\Phi,N} : \widehat{\W}^\Betti_N \to \widehat{\W}^\DeRham_N \) is a comparison isomorphism \cite[Proposition-Definition 3.2.2]{Yad2} attached to $\Phi$.
    \\[2mm]
    For \( N=1 \), a Hopf algebra coproduct $\Delta^{\W, \Betti}$ on $\W^\Betti_1 = \W^\Betti$ was explicitly constructed in \cite{EF1, EF2}, the compatibilty of $\Delta^{\W, \Betti}$ with the filtration on $\W^\Betti$ for $N=1$ was proved, and the corresponding completed coproduct $\widehat{\Delta}^{\W, \Betti}$ was identified with $\widehat{\Delta}^{\W, \Betti}_1$ from \eqref{eq:conjugation}, hence $\widehat{\Delta}^{\W, \Betti}_1 = \widehat{\Delta}^{\W, \Betti}$.
    However, for general $N$, an explicit formula for $\widehat{\Delta}^{\W, \Betti}_N$ is still unknown.
    \subsection{The main results}
    Let $F_2$ be the free group generated by two elements denoted $X_0$ and $X_1$. Consider the group morphism $F_2 \to \mu_N$ given by
    \[
        X_0 \mapsto \zeta_N \text{ and } X_1 \mapsto 1.
    \]
    Its kernel is the group freely generated by the $N+1$ elements \cite[Lemma 3.1.1]{Yad2} 
    \[
        X_0^N  \text{ and } X_0^a X_1 X_0^{-a}, \text{ for } a \in [\![0, N-1]\!].
    \]
    Denote by $\mathcal{I}_N := \ker(\K F_2 \to \K\mu_N)$ where $\K F_2 \to \K\mu_N$ is the $\K$-algebra morphism induced from the group morphism $F_2 \to \mu_N$.
    \begin{definition}[{\cite[Proposition-Definition 3.1.4]{Yad2}}]
        Let $\V^\Betti_N$ be the group algebra $\K{F_2}$ equipped with the algebra filtration given by
        \begin{equation*}
            \mathcal{F}^m \V^{\Betti}_N :=
            \begin{cases}
                \K{F_2} & \text{if } m \leq 0 \\
                \mathcal{I}_N^m & \text{if } m > 0
            \end{cases}, 
        \end{equation*}
        where $\mathcal{I}_N^m$ is the $m$-th power of the ideal $\mathcal{I}_N$ (see Convention \ref{convention_star}).
    \end{definition}

    \begin{definition}[{\cite[Proposition-Definition 3.1.13]{Yad2}}] \label{def:WBFiltration}
        Consider the subalgebra $\W^{\Betti}_N$ of $\V^{\Betti}_N$ given by 
        \[
            \W^{\Betti}_N := \K \oplus \V^{\Betti}_N (X_1 - 1).
        \]
        It is endowed with the algebra filtration given by
        \[  
            \mathcal{F}^m \W^{\Betti}_N := \W^{\Betti}_N \cap \mathcal{F}^m \V^{\Betti}_N, \ \forall m \in \Z.
        \]
    \end{definition}
    
    \noindent When $N=1$, the filtration $(\F^m\V^\Betti_1)_{m \in \Z}$ is the natural filtration of the group algebra $\K{F_2}$ given by powers of the augmentation ideal. Therefore, the induced filtration on $\W^\Betti_1$ corresponds the one given in \cite[Sec. 2.1]{EF1}. We will use the notation $\V^\Betti$ (resp. $\W^\Betti$) instead of $\V^\Betti_1$ (resp. $\W^\Betti_1$) to refer to these naturally filtered algebras.\\[2mm]
    It follows from \cite[Proposition 2.3]{EF1} that the algebra $\W^\Betti$ is generated by 
    \begin{equation*}
        X_1^{-1} \text{ and } X_0^n(X_1-1) \text{ for } n \in \Z. 
    \end{equation*}
    The algebra $\W^\Betti$ is equipped with a bialgebra structure whose coproduct is the algebra morphism $\Delta^{\W, \Betti} : \W^{\Betti} \to \W^{\Betti} \otimes \W^{\Betti}$ given by (see \cite[Lemma 2.11]{EF1})
    \[
        \Delta^{\W, \Betti}(X_1^{-1}) = X_1^{-1} Y_1^{-1},
    \]
    and for $n \in \Z$,
    \[
        \Delta^{\W, \Betti}(X_0^n(X_1-1)) = X_0^n(X_1-1) + Y_0^n(Y_1-1) - \sum_{k=1}^{n-1} X_0^k(X_1-1) Y_0^{n-k}(Y_1-1),
    \]
    where one sets $X_i^{\pm 1} := X_i^{\pm 1} \otimes 1$ and $Y_i^{\pm 1} := 1 \otimes X_i^{\pm 1}$ for $i \in \{0, 1\}$, and one uses the convention that for a map $f$ from $\Z$ to an abelian group and $p, q \in \Z$,
    \begin{equation*}
        \sum_{k=p}^q f(k) :=
        \begin{cases}
            f(p) + \cdots + f(q) & \text{if } q > p - 1 \\
            0 & \text{if } q = p - 1 \\
            -f(p-1) - \cdots - f(q+1) & \text{if } q < p - 1
        \end{cases}
    \end{equation*}
    
    \noindent The following result is the first main theorem of the paper. It states that the coproduct $\Delta^{\W, \Betti}$ is actually compatible with the filtration given in Definition \ref{def:WBFiltration}:
    \begin{theorem} \label{thm:main_theorem1}
        For any $m \in \Z$, we have
        \[
            \Delta^{\W, \Betti}(\F^m\W^\Betti_N) \subset \F^m(\W^\Betti_N \otimes \W^\Betti_N).
        \]
    \end{theorem}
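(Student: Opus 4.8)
The plan is to exploit that both filtrations are multiplicative algebra filtrations and that $\Delta^{\W,\Betti}$ is an algebra morphism, so that the desired inclusion reduces to a statement about generators. Concretely, equip $\W^\Betti_N \otimes \W^\Betti_N$ with the tensor product filtration $\F^m(\W^\Betti_N\otimes\W^\Betti_N)=\sum_{i+j\ge m}\F^i\W^\Betti_N\otimes\F^j\W^\Betti_N$, which satisfies $\F^i\cdot\F^j\subset\F^{i+j}$. If one can produce a family $\mathcal{G}$ of algebra generators of $\W^\Betti_N$, each $g\in\mathcal{G}$ assigned an integer $d(g)$ with $g\in\F^{d(g)}\W^\Betti_N$ and $\Delta^{\W,\Betti}(g)\in\F^{d(g)}(\W^\Betti_N\otimes\W^\Betti_N)$, and such that $\F^m\W^\Betti_N$ is spanned by the products $g_1\cdots g_r$ with $\sum_i d(g_i)\ge m$, then multiplicativity immediately yields $\Delta^{\W,\Betti}(\F^m\W^\Betti_N)\subset\F^m(\W^\Betti_N\otimes\W^\Betti_N)$. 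Thus the problem splits into a structural part (describe $\F^m\W^\Betti_N$ via generators) and a computational part (check $\Delta^{\W,\Betti}$ on generators).

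For the structural part, I would first record that $\mathcal{I}_N$ is the two-sided ideal generated by $X_0^N-1$ and $X_1-1$, since $\K F_2/\langle X_0^N-1,X_1-1\rangle\cong\K\mu_N$; these are the two degree-$1$ building blocks of the filtration on $\V^\Betti_N$. As $X_1-1$ is a regular element of $\K F_2$ and induces an injection on each associated graded piece of $\V^\Betti_N$ (with $\gr_\F\V^\Betti_N$ carrying a tensor-algebra structure over $\K\mu_N$ on $\mathcal{I}_N/\mathcal{I}_N^2$), one obtains the clean description $\F^m\W^\Betti_N=\mathcal{I}_N^{\,m-1}(X_1-1)$ for $m\ge1$. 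Guided by the case $N=1$ of \cite{EF1}, where the generators are the elements $(X_0-1)^k(X_1-1)$, I would then take as $\mathcal{G}$ the degree-$0$ elements $X_1^{\pm1}$ together with the degree-$(k+1)$ elements $Y_{k,a}:=X_0^a(X_0^N-1)^k(X_1-1)$ for $k\ge0$ and $a\in[\![0,N-1]\!]$, and show, by an explicit rewriting of the spanning elements of $\mathcal{I}_N^{\,m-1}(X_1-1)$, that their leading symbols generate $\gr_\F\W^\Betti_N$ over $\K\mu_N$; this produces the required spanning of $\F^m\W^\Betti_N$ by products of total degree $\ge m$.

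For the computational part, I would expand $Y_{k,a}=\sum_{j=0}^k(-1)^{k-j}\binom{k}{j}\,x_{a+jN}$ with $x_n:=X_0^n(X_1-1)$ and apply the coproduct formula for $\Delta^{\W,\Betti}(x_n)$ term by term. Writing $\Delta^{\W,\Betti}(Y_{k,a})=Y_{k,a}\otimes1+1\otimes Y_{k,a}+(\text{cross-terms})$, the first two summands lie in $\F^{k+1}$ trivially; the content is that the cross-terms, which a priori lie only in $\F^1\otimes\F^1=\F^2$, actually lie in $\F^{k+1}$. It is convenient to reorganize the coproduct through the two group-like identities it encodes, namely $\Delta^{\W,\Betti}(X_1)=X_1\otimes X_1$ and $\Delta^{\W,\Betti}\big(1-\sum_{n\ge1}x_n t^n\big)=\big(1-\sum_{n\ge1}x_n t^n\big)^{\otimes2}$, and then to rewrite each $x_i\otimes x_j$ using the differences $x_{n+N}-x_n=(X_0^N-1)X_0^n(X_1-1)\in\F^2$ and their iterates. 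The main obstacle is exactly this cross-term analysis: because $X_0^N-1\notin\W^\Betti_N$ it cannot be fed into $\Delta^{\W,\Betti}$, so its contribution must be produced intrinsically inside $\W^\Betti_N$ through $\K$-combinations of the $x_n$, and one has to combine the binomial cancellations coming from $(X_0^N-1)^k$ with the finer filtration estimates $x_{n+N}\equiv x_n\ (\mathrm{mod}\ \F^2)$ — a $\mu_N$-equivariant bookkeeping that is the genuinely new feature compared with $N=1$. I expect this step to be carried out by induction on $k$, each cross-term being re-expressed in terms of the $Y_{k',a'}$ with $k'<k$, whose coproducts are already controlled.
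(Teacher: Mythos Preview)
Your overall strategy---decompose $\F^m\W^\Betti_N$ into a ``new'' piece plus products of lower filtration terms, then induct on $m$ using that $\Delta^{\W,\Betti}$ is an algebra morphism---is exactly the paper's. The structural part you describe is the content of the paper's lemma establishing, for $m\ge2$,
\[
\F^m\W^\Betti_N=(X_0^N-1)^{m-1}\,\K[X_0,X_0^{-1}]\,(X_1-1)+\sum_{k=1}^{m-1}\F^k\W^\Betti_N\cdot\F^{m-k}\W^\Betti_N,
\]
and your single generators $Y_{m-1,a}$ play the role of the first summand. One small oversight: restricting $a$ to $[\![0,N-1]\!]$ loses the negative powers of $X_0$---the subalgebra generated by your $\mathcal{G}$ sits inside $\K\langle X_0,X_1,X_1^{-1}\rangle$ and misses elements such as $X_0^{-1}(X_1-1)$; you need $a\in\Z$, or equivalently the full Laurent ring $\K[X_0,X_0^{-1}]$, just as in the paper's lemma.

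Where the paper is more efficient is the computational step. Rather than expanding $Y_{k,a}$ binomially and chasing cross-terms by an induction on $k$, it writes the coproduct of any $Q(X_0)(X_1-1)$ in closed form,
\[
\Delta^{\W,\Betti}\bigl(Q(X_0)(X_1-1)\bigr)=Q(X_0)(X_1-1)+Q(Y_0)(Y_1-1)-\frac{Q(X_0)Y_0-Q(Y_0)X_0}{X_0-Y_0}\,(X_1-1)(Y_1-1),
\]
takes $Q(X_0)=(X_0^N-1)^{m-1}P(X_0,X_0^{-1})$, and splits the divided-difference cross-term into three pieces: two of them retain the full factor $(X_0^N-1)^{m-1}$ (or its $Y_0$-version) and land directly in $\F^m\W^\Betti_N\otimes\F^1\W^\Betti_N$ or its swap, hence in $\F^{m+1}$; the remaining piece uses the telescoping identity
\[
\frac{(X_0^N-1)^{m-1}-(Y_0^N-1)^{m-1}}{X_0-Y_0}=\frac{X_0^N-Y_0^N}{X_0-Y_0}\sum_{\ell=0}^{m-2}(X_0^N-1)^\ell(Y_0^N-1)^{m-2-\ell},
\]
so that each term visibly lies in $\F^{\ell+1}\W^\Betti_N\otimes\F^{m-1-\ell}\W^\Betti_N\subset\F^m(\W^\Betti_N\otimes\W^\Betti_N)$. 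This single algebraic identity replaces the iterated $x_{n+N}-x_n\in\F^2$ bookkeeping you anticipate and removes the need for a secondary induction on $k$; apart from that, your plan and the paper's proof coincide.
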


    \begin{definition}[{\cite[§2.1.1]{Yad1}}]
        Let $\V^\DeRham_N$ be the graded $\K$-algebra\footnote{in \cite[§2.1.1]{Yad1} this corresponds to $\V_G$ for $G=\mu_N$.} generated by \(\{e_0, e_1\} \sqcup \mu_N\) where $e_0$ and $e_1$ are of degree $1$ and elements $\zeta \in \mu_N$ are of degree $0$ satisfying the relations:
        \setlength\multicolsep{5pt}
        \begin{multicols}{3}
            \begin{enumerate}[leftmargin=*, label=(\roman*)]
                \item $\zeta \cdot \eta = \zeta \eta$;
                \item $1_{\V^\DeRham_N} = 1$;
                \item $\zeta \cdot e_0 = e_0 \cdot \zeta$;
            \end{enumerate}
        \end{multicols}
        \setlength\multicolsep{5pt}
        \noindent for any $\zeta, \eta \in \mu_N$; where “$\cdot$” is the algebra multiplication\footnote{which we will omit if there is no risk of ambiguity.}.
    \end{definition}

    \noindent Recall from \cite[§2.1.1]{Yad1} the subalgebra
    \[
        \W^\DeRham_N := \K \oplus \V^\DeRham_N e_1
    \]
    of $\V^\DeRham_N$. It is a graded algebra freely generated by (\cite[Proposition 2.6(ii)]{Yad1})
    \[
        Z := \{ z_{n,\zeta} := - e_0^{n-1} \zeta e_1 \, | \, (n, \zeta) \in \Z_{>0} \times \mu_N \},
    \]
    where for any $(n, \zeta) \in \Z_{>0} \times \mu_N$ the element $z_{n,\zeta}$ is of degree $n$.
    Moreover, $\W^\DeRham_N$ is equipped with a Hopf algebra structure with respect to the \emph{harmonic coproduct}, which is the algebra morphism $\Delta^{\W, \DeRham}_N : \W^\DeRham_N \to \W^\DeRham_N \otimes \W^\DeRham_N$ given by (\cite[Proposition 2.11(i)]{Yad1})
    \begin{equation*}
        \Delta^{\W, \DeRham}_N (z_{n, \zeta}) = z_{n, \zeta} \otimes 1 + 1 \otimes z_{n, \zeta} + \sum_{\substack{k=1 \\ \eta \in \mu_N}}^{n-1} z_{k, \eta} \otimes z_{n-k, \zeta \eta^{-1}}. 
    \end{equation*}
    
    \noindent Let $\gr(\V^\Betti_N)$ be the associated graded algebra of $\V^\Betti_N$ for the $\mu_N$-filtration $(\F^m\V^\Betti_N)_{m \in \Z}$. For $m \in \Z$ and $v \in \F^m\V^\Betti_N$, denote by $[v]_m$ the image in $\F^m \V^\Betti_N \big/ \F^{m+1} \V^\Betti_N$ of the element $v$.
    \begin{proposition}[{\cite[Theorem 3.1.6]{Yad} and \cite[Proposition 3.1.12]{Yad}}] \ 
        \begin{enumerate}[label=(\alph*), leftmargin=*]
            \item There exists a graded algebra isomorphism $\rho^\V_N : \V^\DeRham_N \to \gr(\V^\Betti_N)$ uniquely defined by
            \[
                \zeta_N \mapsto [X_0]_0, \quad e_0 \mapsto [X_0^N - 1]_1, \quad e_1 \mapsto [X_1 - 1]_1.
            \]
            \item The graded algebra isomorphism $\rho^\V_N : \V^\DeRham_N \to \gr(\V^\Betti_N)$ restricts to a graded algebra isomorphism $\rho^\W_N : \W^\DeRham_N \to \gr(\W^\Betti_N)$.
        \end{enumerate}
    \end{proposition}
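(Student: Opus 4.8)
The plan is to realize both sides of (a) as explicit graded free $\K$-modules of equal finite rank in each degree, to check that $\rho^\V_N$ is a well-defined surjective graded algebra morphism, and then to conclude that it is an isomorphism; part (b) will follow from a leading-term analysis of right multiplication by $X_1-1$. First I would verify well-definedness: the assignment on generators must respect the three relations defining $\V^\DeRham_N$. Relations (i) and (ii) reduce to $[X_0^N]_0=[1]_0$ in $\F^0\V^\Betti_N/\F^1\V^\Betti_N=\K F_2/\mathcal{I}_N\cong\K\mu_N$, which holds since $X_0^N-1\in\mathcal{I}_N$; in fact $\zeta_N\mapsto[X_0]_0$ then induces an algebra isomorphism $\K\mu_N\xrightarrow{\sim}\gr_0(\V^\Betti_N)$. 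Relation (iii), $\zeta e_0=e_0\zeta$, holds on the nose because $X_0$ commutes with $X_0^N-1$ in $\K F_2$. Hence $\rho^\V_N$ is a well-defined graded algebra morphism.

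Next I would prove surjectivity. Since $\mathcal{I}_N^m=\mathcal{I}_N\cdot\mathcal{I}_N^{m-1}$, the algebra $\gr(\V^\Betti_N)$ is generated in degrees $0$ and $1$, so it suffices to hit $\gr_0$ (done above) and $\gr_1=\mathcal{I}_N/\mathcal{I}_N^2$. Writing $K=\ker(F_2\to\mu_N)$, a free group on $X_0^N$ and $X_0^aX_1X_0^{-a}$, and $I_K$ for its augmentation ideal, one has $\mathcal{I}_N=\K F_2\cdot I_K=I_K\cdot\K F_2$; since $I_K$ is generated as a left ideal by the $g-1$ over the free generators $g$, the quotient $\mathcal{I}_N/\mathcal{I}_N^2$ is generated as a $\gr_0$-bimodule by $[X_0^N-1]_1$ and the $[X_0^aX_1X_0^{-a}-1]_1$. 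The identity $X_0^aX_1X_0^{-a}-1=X_0^a(X_1-1)X_0^{-a}$ gives $[X_0^aX_1X_0^{-a}-1]_1=[X_0]_0^{\,a}[X_1-1]_1[X_0]_0^{-a}$, so all these classes lie in the subalgebra generated by $[X_0]_0,[X_0^N-1]_1,[X_1-1]_1$, whence $\rho^\V_N$ is surjective. For the rank count, on the Betti side $\K F_2=\bigoplus_{a=0}^{N-1}X_0^a\,\K K$ as a right $\K K$-module, and from $\mathcal{I}_N=\K F_2 I_K=I_K\K F_2$ one deduces $\mathcal{I}_N^m=\K F_2\,I_K^m=\bigoplus_a X_0^a I_K^m$, so $\gr_m(\V^\Betti_N)\cong\bigoplus_a X_0^a(I_K^m/I_K^{m+1})$; by the Magnus identification of $\gr(\K K)$ with the tensor algebra on the $N+1$ classes $[g-1]$, this is free of rank $N(N+1)^m$. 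On the de Rham side, setting $A=\K\mu_N$, the defining relations identify $\V^\DeRham_N$ with the tensor algebra $T_A(M)$ of the $A$-bimodule $M=A\,e_0\oplus(A\otimes_\K A)\,e_1$, where $Ae_0$ is the regular (unit) bimodule because $e_0$ is central over $A$, and $(A\otimes_\K A)e_1$ is the free rank-one bimodule. As the unit bimodule drops out of $\otimes_A$, a degree-$m$ monomial with $k$ factors $e_1$ spans a copy of $A^{\otimes_\K(k+1)}$, giving $\dim_\K(\V^\DeRham_N)_m=\sum_{k=0}^m\binom{m}{k}N^{k+1}=N(N+1)^m$. Thus $\rho^\V_N$ is a degreewise surjection of free $\K$-modules of equal finite rank, hence an isomorphism (a surjection between free modules of equal finite rank over a commutative ring is invertible).

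For (b), since the filtration on $\W^\Betti_N$ is induced from $\V^\Betti_N$, the inclusion gives an injection of graded algebras $\gr(\W^\Betti_N)\hookrightarrow\gr(\V^\Betti_N)$, so $\rho^\W_N:=\rho^\V_N|_{\W^\DeRham_N}$ is automatically injective; it remains to identify its image with $\gr(\W^\Betti_N)$. Because $X_1-1$ lies in $\ker(\K F_2\to\K\mu_N)$, one has $\W^\Betti_N\cap\mathcal{I}_N=\V^\Betti_N(X_1-1)$, so $\gr_0(\W^\Betti_N)=\K$ and $\F^m\W^\Betti_N=\V^\Betti_N(X_1-1)\cap\mathcal{I}_N^m$ for $m\geq 1$. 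The containment $\rho^\V_N(\W^\DeRham_N)\subseteq\gr(\W^\Betti_N)$ is immediate from $\rho^\V_N(-e_0^{n-1}\zeta_N^b e_1)=[-(X_0^N-1)^{n-1}X_0^b(X_1-1)]_n$, which is the class of an element of $\V^\Betti_N(X_1-1)\cap\mathcal{I}_N^n$. The reverse inclusion is the crux: given $0\neq w=v(X_1-1)\in\mathcal{I}_N^m$, the filtration is separated (as $\bigcap_m I_K^m=0$), so $v$ has a well-defined top degree $d$ with $[v]_d\neq 0$; since right multiplication by $e_1=[X_1-1]_1$ is injective on $\gr(\V^\Betti_N)\cong T_A(M)$ (because $e_1$ generates a free bimodule summand), one gets $[w]_{d+1}=[v]_d[X_1-1]_1\neq 0$, so $w$ has filtration degree exactly $d+1$ and leading term in $\gr_d(\V^\Betti_N)\cdot[X_1-1]_1=\rho^\V_N\big((\V^\DeRham_N)_d\,e_1\big)$. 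Therefore $\gr_m(\W^\Betti_N)=\rho^\V_N\big((\W^\DeRham_N)_m\big)$ for all $m$, and $\rho^\W_N$ is a graded isomorphism.

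The main obstacle is injectivity, i.e. the absence of unexpected relations in $\gr(\V^\Betti_N)$; I resolve it not by hand but by matching Hilbert series, which forces the two essential structural inputs — the Magnus theorem giving $\gr(\K K)$ as a tensor algebra on the Betti side, and the bimodule tensor-algebra description $\V^\DeRham_N\cong T_A(M)$ on the de Rham side. For (b), the delicate point is that the induced filtration on $\W^\Betti_N$ could in principle jump, and this is exactly controlled by the injectivity of right multiplication by $e_1$ on $\V^\DeRham_N$, which guarantees no cancellation of leading terms.
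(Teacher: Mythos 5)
You should note first that there is no in-paper proof to compare with: the proposition is imported from the second author's earlier work (cited as Theorem 3.1.6 and Proposition 3.1.12 there), and the present paper only uses the statement. So your proposal stands as an independent proof, and as far as I can check it is correct. The one ingredient you share with the cited framework is the freeness of $K:=\ker(F_2\to\mu_N)$ on the $N+1$ generators $X_0^N$ and $X_0^aX_1X_0^{-a}$ (quoted in the introduction of this paper); the rest of the mechanism is your own. Well-definedness and surjectivity are routine and correctly handled: $\gr(\V^\Betti_N)$ is generated in degrees $0$ and $1$ since $\mathcal{I}_N^m=\mathcal{I}_N\cdot\mathcal{I}_N^{m-1}$, and the degree-one generators $[X_0^N-1]_1$ and $[X_0^aX_1X_0^{-a}-1]_1=[X_0]_0^a\,[X_1-1]_1\,[X_0]_0^{-a}$ are visibly hit. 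The interesting step is injectivity by rank count: on the Betti side, the coset decomposition $\K F_2=\bigoplus_a X_0^a\,\K K$, the identity $\mathcal{I}_N^m=\K F_2\,I_K^m$ (valid because $K$ is normal, so $I_K\,\K F_2=\K F_2\,I_K$) and Magnus' theorem give free rank $N(N+1)^m$ in degree $m$; on the de Rham side, the universal-property identification $\V^\DeRham_N\cong T_A(M)$ with $A=\K\mu_N$, $M=Ae_0\oplus(A\otimes_\K A)e_1$ (maps out of $\V^\DeRham_N$ are exactly an algebra map on $A$, an $A$-central element, and an arbitrary element) gives the same rank; and a degreewise surjection between free $\K$-modules of equal finite rank is bijective. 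This buys you an isomorphism proof with no need to construct an inverse or an explicit crossed-product/semidirect model. Part (b) is also sound: injectivity of $\rho^\W_N$ is inherited from (a) via $\gr(\W^\Betti_N)\hookrightarrow\gr(\V^\Betti_N)$, and your leading-term argument — right multiplication by $e_1$ is injective on $T_A(M)$ because $e_1$ spans a free bimodule summand and $1\in A$ is a basis vector, so the leading term of $v(X_1-1)$ is $[v]_d[X_1-1]_1\neq 0$ — is exactly what rules out unexpected classes in the induced filtration.

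Three points deserve an extra line. First, in (b) your concluding equality $\gr_m(\W^\Betti_N)=\rho^\V_N\big((\W^\DeRham_N)_m\big)$ as written only treats $w\in\F^m\W^\Betti_N$ whose exact filtration degree $d+1$ equals $m$; add the remark that if $d+1>m$ then $[w]_m=0$, which lies trivially in the image. Second, Magnus' theorem and the separatedness $\bigcap_m I_K^m=0$ are classically stated over $\Z$ (or a field), while here $\K$ is an arbitrary commutative $\Q$-algebra, possibly with zero divisors; both facts do extend, by base change, using that each quotient $\Z K/I^m$ is a free $\Z$-module (so $I^m_\K=I^m_\Z\otimes_\Z\K$ and the associated graded base-changes) and that $\K$ is flat over $\Q$ (so the Magnus embedding remains injective after extension of scalars) — this deserves a sentence. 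Third, the count $\dim_\K(T_A(M))_m=\sum_{k=0}^m\binom{m}{k}N^{k+1}$ should be justified by observing that a degree-$m$ tensor containing $k$ factors $(A\otimes_\K A)e_1$ collapses under $\otimes_A$ to $A^{\otimes_\K(k+1)}$ regardless of where the $Ae_0$-factors sit, the $\binom{m}{k}$ counting those positions. None of these is a structural gap; the proof is sound.
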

    
    \noindent By Theorem \ref{thm:main_theorem1}, the filtered algebra morphism $\Delta^{\W, \Betti} : \W^\Betti_N \to \W^\Betti_N \otimes \W^\Betti_N$ induces the graded algebra morphism
    \[
        \gr(\Delta^{\W, \Betti}) : \gr(\W^\Betti_N) \to \gr(\W^\Betti_N) \otimes \gr(\W^\Betti_N).
    \]
    The following result is the second main theorem of the paper. It states that the associated graded algebra morphism $\gr(\Delta^{\W, \Betti})$ is in fact the graded algebra morphism $\Delta^{\W, \DeRham}_N$.
    \begin{theorem} \label{thm:maintheorem2}
        The following diagram
        \begin{equation} \label{diag:maintheorem2}
        \begin{tikzcd}
            \W^\DeRham_N \ar[rr, "\Delta^{\W, \DeRham}"] \ar[d, "\rho_N^\W"'] && \W^\DeRham_N \otimes \W^\DeRham_N \ar[d, "\rho_N^\W \otimes \rho_N^\W"] \\
            \gr(\W^\Betti) \ar[rr, "\gr(\Delta^{\W, \Betti})"] && \gr(\W^\Betti) \otimes \gr(\W^\Betti)
        \end{tikzcd}
        \end{equation}
        commutes.
    \end{theorem}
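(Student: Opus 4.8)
The plan is to verify the commutativity of \eqref{diag:maintheorem2} on the free algebra generators $z_{n,\zeta}$ of $\W^\DeRham_N$, and then to convert the resulting identity in $\gr(\W^\Betti_N)\otimes\gr(\W^\Betti_N)$ into a combinatorial statement about alternating binomial sums. Since all four arrows of the diagram are algebra morphisms and $\W^\DeRham_N$ is freely generated by $Z=\{z_{n,\zeta}\}$, it suffices to show that $(\rho^\W_N\otimes\rho^\W_N)\circ\Delta^{\W,\DeRham}_N$ and $\gr(\Delta^{\W,\Betti})\circ\rho^\W_N$ coincide on each $z_{n,\zeta}$. Writing $\zeta=\zeta_N^a$ with $a\in\{0,\dots,N-1\}$, the formula defining $\rho^\V_N$ shows that $\rho^\W_N(z_{n,\zeta})=[w_{n,a}]_n$, where $w_{n,a}:=-(X_0^N-1)^{n-1}X_0^a(X_1-1)\in\F^n\W^\Betti_N$; thus the lower-left composite applied to $z_{n,\zeta}$ equals $[\Delta^{\W,\Betti}(w_{n,a})]_n$, and the task is to compute this leading term.

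Expanding $(X_0^N-1)^{n-1}$ by the binomial theorem and setting $u_k:=X_0^k(X_1-1)$, a generator of the algebra $\W^\Betti_N=\W^\Betti$ for every $k\in\Z$, gives $w_{n,a}=\sum_{j=0}^{n-1}\binom{n-1}{j}(-1)^{n-j}u_{jN+a}$, so by linearity $\Delta^{\W,\Betti}(w_{n,a})=\sum_{j=0}^{n-1}\binom{n-1}{j}(-1)^{n-j}\Delta^{\W,\Betti}(u_{jN+a})$, each factor being given by the explicit coproduct of $u_m$. The primitive summands $u_m\otimes1+1\otimes u_m$ reassemble, with the same coefficients, into $w_{n,a}\otimes1+1\otimes w_{n,a}$, whose degree-$n$ part is $\rho^\W_N(z_{n,\zeta})\otimes1+1\otimes\rho^\W_N(z_{n,\zeta})$, matching the primitive part of $\Delta^{\W,\DeRham}_N(z_{n,\zeta})$. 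For the remaining (quadratic) part I would re-expand every $u_k$ occurring there in the basis of ``pure-degree'' elements $w_{m,r}$: from $X_0^k=X_0^r(1+(X_0^N-1))^{\lfloor k/N\rfloor}$ with $r=k\bmod N$ one obtains $u_k=-\sum_{i\ge0}\binom{\lfloor k/N\rfloor}{i}w_{i+1,r}$, where $w_{i+1,r}\in\F^{i+1}$ and $[w_{i+1,r}]_{i+1}=\rho^\W_N(z_{i+1,\zeta_N^r})$.

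Substituting these expansions, the coefficient of a fixed product $w_{p,r}\otimes w_{p',r'}$ (with $r,r'\in\{0,\dots,N-1\}$, $p,p'\ge1$, $p+p'=d$) in $\Delta^{\W,\Betti}(w_{n,a})$ equals $-\sum_{j=0}^{n-1}\binom{n-1}{j}(-1)^{n-j}P_{r,r'}(j)$, where $P_{r,r'}(j):=\sum\binom{q_k}{p-1}\binom{q_l}{p'-1}$ runs over splittings $k+l=jN+a$ with $k\equiv r$, $l\equiv r'$ modulo $N$ and $q_k=\lfloor k/N\rfloor$, $q_l=\lfloor l/N\rfloor$. By the Vandermonde convolution $\sum_{s+t=j}\binom{s}{p-1}\binom{t}{p'-1}=\binom{j+1}{d-1}$, the function $P_{r,r'}$ is a polynomial in $j$ of degree at most $d-1$. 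Now $\sum_{j=0}^{n-1}\binom{n-1}{j}(-1)^{n-j}Q(j)$ equals $-(\Delta^{n-1}Q)(0)$, the $(n-1)$-st finite difference of $Q$, which vanishes when $\deg Q<n-1$ and extracts $(n-1)!$ times the leading coefficient when $\deg Q=n-1$. Hence every contribution with $d<n$ cancels, recovering the containment of Theorem \ref{thm:main_theorem1} as a by-product, while for $d=n$ the leading coefficient of $P_{r,r'}$ is $1/(n-1)!$, so the surviving coefficient of $w_{p,r}\otimes w_{p',r'}$ is exactly $1$. This gives $[\Delta^{\W,\Betti}(w_{n,a})]_n=\rho^\W_N(z_{n,\zeta})\otimes1+1\otimes\rho^\W_N(z_{n,\zeta})+\sum_{p=1}^{n-1}\sum_{r=0}^{N-1}\rho^\W_N(z_{p,\zeta_N^r})\otimes\rho^\W_N(z_{n-p,\zeta_N^{a-r}})$, which is exactly $(\rho^\W_N\otimes\rho^\W_N)(\Delta^{\W,\DeRham}_N(z_{n,\zeta}))$ upon setting $\eta=\zeta_N^r$.

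The main obstacle is the evaluation of $P_{r,r'}(j)$ and, in particular, the verification that its degree-$(n-1)$ coefficient is $1/(n-1)!$ uniformly. This requires splitting the congruence $r+r'\equiv a\pmod N$ into the two cases $r+r'=a$ and $r+r'=a+N$ (leading to $q_k+q_l=j$ and $q_k+q_l=j-1$, hence to $\binom{j+1}{n-1}$ and $\binom{j}{n-1}$ respectively, both with leading coefficient $1/(n-1)!$), and controlling the boundary corrections coming from the constraints $k,l\ge1$ (the exclusion of $q_k=0$ when $r=0$, and symmetrically for $r'$). Conceptually this bookkeeping is harmless: those corrections are polynomials in $j$ of degree at most $n-2$, so the $(n-1)$-st finite difference annihilates them entirely and they drop out of the final coefficient. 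The clean point underlying the whole computation is that the alternating weights $\binom{n-1}{j}(-1)^{n-j}$ act as an order-$(n-1)$ finite difference that kills all sub-leading polynomial behaviour, leaving only the degree-$n$ term, whose Vandermonde leading part reorganizes precisely into the $\mu_N$-indexed sum of the harmonic coproduct.
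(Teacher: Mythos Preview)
Your proposal is correct and reaches the same identity as the paper, but by a genuinely different route. The paper evaluates $\Delta^{\W,\Betti}\bigl((X_0^N-1)^{m-1}X_0^a(X_1-1)\bigr)$ in closed form via the divided-difference expression established in Lemma~\ref{lem:DeltaWNFmWBN} (equation~\eqref{eq:DeltaWBelt}), splits the result into three pieces $\widetilde A,\widetilde B,\widetilde C$, and invokes the filtration estimates \eqref{A_in_Fm1}--\eqref{C_in_Fm} to see that only $\widetilde C$ survives in degree~$m$; a short direct expansion of $\widetilde C$ then matches the harmonic coproduct term by term. You instead expand $(X_0^N-1)^{n-1}$ binomially, apply the generator-level formula for $\Delta^{\W,\Betti}(u_m)$, re-expand each $u_k$ in the basis $\{w_{p,r}\}$, and isolate the degree-$n$ piece combinatorially via the Vandermonde convolution together with the observation that the alternating weights $\binom{n-1}{j}(-1)^{n-j}$ implement an $(n-1)$-st finite difference, annihilating every polynomial in $j$ of degree below $n-1$.

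The paper's approach is shorter precisely because it recycles the machinery already built for Theorem~\ref{thm:main_theorem1}; your approach is self-contained (it does not rely on the proof of Lemma~\ref{lem:DeltaWNFmWBN}) and in fact reproves the special case $\Delta^{\W,\Betti}(w_{n,a})\in\F^n(\W^\Betti_N\otimes\W^\Betti_N)$ along the way. Your remark that this ``recovers Theorem~\ref{thm:main_theorem1} as a by-product'' is a slight overstatement: you recover it only on the specific elements $w_{n,a}$, whereas the full statement still requires the inductive decomposition of Lemma~\ref{lem:FmWBN}. The boundary bookkeeping you flag (the exclusions at $k=0$ or $l=0$ when $r=0$ or $r'=0$, and the two congruence cases $r+r'=a$ versus $r+r'=a+N$) is real but, as you correctly note, harmless: each correction is a polynomial in $j$ of degree at most $n-2$, hence killed by the finite difference, and in the second congruence case one always has $r,r'\ge 1$ so no boundary term arises.
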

    \noindent Finally, regarding the topological algebra morphism $\widehat{\Delta}^{\W, \Betti}_N$ given in \eqref{eq:conjugation}, Theorems \ref{thm:main_theorem1} and \ref{thm:maintheorem2} motivate the following problem:
    \begin{problem}
        For suitable $a,b \in \mathbb{Z}$, show that the topological algebra morphism $\widehat{\Delta}^{\W, \Betti}_N$ is the completion (w.r.t. the filtration $(\F^m\W^\Betti_N)_{m \in \Z}$) of the algebra morphism $\mathrm{Ad}_{X_1^a Y_1^b} \circ \Delta^{\W, \Betti}$.
    \end{problem}
    
    \section{Compatibility of \texorpdfstring{$\Delta^{\W, \Betti}$}{DeltaWB} with the filtration \texorpdfstring{$(\F^m\V^\Betti_N)_{m \in \Z}$}{(FmVB)m}}
    \noindent In this section, we prove Theorem \ref{thm:main_theorem1}. To do so, we will start with some preparatory results.

    \begin{lemma} \label{lem:FmWB}
    For $m \in \Z_{>0}$, we have
        \begin{enumerate}[label=(\alph*), leftmargin=*]
            \vspace{-2mm}
            \begin{multicols}{2}
            \item \label{FmWB_equality} $\mathcal{F}^m \W^{\Betti}_N = \mathcal{F}^m \V^{\Betti}_N \cap \V^{\Betti}_N (X_1 - 1)$.
            \item \label{FmWB_to_FmVB}$\mathcal{F}^m \W^{\Betti}_N = \mathcal{F}^{m-1} \V^{\Betti}_N (X_1 - 1)$.
            \end{multicols}
            \item \label{FmWB_is_VB_mod}$\F^m\W^\Betti_N$ is a left $\V^\Betti_N$-module.
        \end{enumerate}
    \end{lemma}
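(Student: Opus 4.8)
The plan is to prove the three parts in order: (a) directly from the definitions, (b) from (a) together with a structural identity for the ideals $\mathcal{I}_N^m$, and (c) as a formal consequence of (b). Throughout I set $K := \ker(F_2 \to \mu_N)$, which by the cited Lemma is freely generated by $X_0^N$ and $\xi_a := X_0^a X_1 X_0^{-a}$ for $0 \le a \le N-1$; I denote these $N+1$ free generators collectively by $\gamma_i$, and I write $\mathfrak{a} := \ker(\K K \to \K)$ for the augmentation ideal of the free group algebra $\K K$.

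For (a), since $\F^m\W^\Betti_N = \W^\Betti_N \cap \F^m\V^\Betti_N$ and $\F^m\V^\Betti_N = \mathcal{I}_N^m$ for $m>0$, it suffices to show $\W^\Betti_N \cap \mathcal{I}_N^m = \V^\Betti_N(X_1-1)\cap\mathcal{I}_N^m$. The inclusion $\supseteq$ is clear from $\V^\Betti_N(X_1-1)\subseteq\W^\Betti_N$. For $\subseteq$ I would use $X_1-1\in\mathcal{I}_N$ (so $\V^\Betti_N(X_1-1)\subseteq\mathcal{I}_N$) and $\K\cap\mathcal{I}_N=0$ (a nonzero scalar has nonzero image in $\K\mu_N$): if $w = c\cdot 1 + v(X_1-1)$ lies in $\mathcal{I}_N^m\subseteq\mathcal{I}_N$, then $c\cdot 1\in\mathcal{I}_N$, forcing $c=0$ and $w\in\V^\Betti_N(X_1-1)$. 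For the inclusion $\supseteq$ of (b), multiplicativity of the filtration gives $\F^{m-1}\V^\Betti_N(X_1-1)\subseteq\F^m\V^\Betti_N$, and since this set lies in $\V^\Betti_N(X_1-1)$, part (a) yields $\F^{m-1}\V^\Betti_N(X_1-1)\subseteq\F^m\W^\Betti_N$.

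The heart of the argument is the reverse inclusion in (b). By (a), any $w\in\F^m\W^\Betti_N$ is of the form $v(X_1-1)$ with $v\in\V^\Betti_N$ and $v(X_1-1)\in\mathcal{I}_N^m$; it then suffices to prove the claim that $v(X_1-1)\in\mathcal{I}_N^m$ implies $v\in\mathcal{I}_N^{m-1}=\F^{m-1}\V^\Betti_N$, since then $w=v(X_1-1)\in\F^{m-1}\V^\Betti_N(X_1-1)$ outright (no injectivity of right multiplication by $X_1-1$ is required). To prove this claim I would first record $\mathcal{I}_N=\mathfrak{a}\,\K F_2=\K F_2\,\mathfrak{a}$, obtained from the coset decomposition $F_2=\bigsqcup_{a=0}^{N-1}KX_0^a$ (valid since $K$ is normal); this commutation gives $\mathcal{I}_N^m=\mathfrak{a}^m\K F_2$, and via the free left $\K K$-module decomposition $\K F_2=\bigoplus_{a=0}^{N-1}\K K\,X_0^a$ one gets $\mathcal{I}_N^m=\bigoplus_{a=0}^{N-1}\mathfrak{a}^m X_0^a$. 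Writing $v=\sum_a v_aX_0^a$ with $v_a\in\K K$ and using $X_0^aX_1=\xi_aX_0^a$, one finds $v(X_1-1)=\sum_a v_a(\xi_a-1)X_0^a$, so that $v(X_1-1)\in\mathcal{I}_N^m$ is equivalent to $v_a(\xi_a-1)\in\mathfrak{a}^m$ for each $a$. The claim thus reduces to the following statement in $\K K$, which I expect to be the main obstacle: \emph{if $\gamma$ is a free generator of $K$ and $w(\gamma-1)\in\mathfrak{a}^m$, then $w\in\mathfrak{a}^{m-1}$}. I would prove it on the associated graded $\gr_{\mathfrak{a}}(\K K)$, which for a free group $K$ is the free associative $\K$-algebra on the symbols $t_i:=[\gamma_i-1]$ — valid over any $\Q$-algebra $\K$ by base change from the integral Magnus identification, each graded piece over $\Z$ being free abelian. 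If $w\notin\mathfrak{a}^{m-1}$, choose the maximal $j\le m-2$ with $w\in\mathfrak{a}^j$; then the symbol of $w$ in degree $j$ is nonzero, and the symbol of $w(\gamma-1)$ in degree $j+1$ equals it right-multiplied by $t_\gamma$, hence is nonzero because right multiplication by a single free generator is injective on a free associative algebra. This gives $w(\gamma-1)\notin\mathfrak{a}^{j+2}\supseteq\mathfrak{a}^m$, contradicting the hypothesis.

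Finally, (c) follows formally from (b): the set $\F^{m-1}\V^\Betti_N$ is either $\K F_2$ (when $m=1$) or the two-sided ideal $\mathcal{I}_N^{m-1}$ (when $m\ge 2$), hence in both cases a left $\V^\Betti_N$-module, so that $\V^\Betti_N\cdot\F^m\W^\Betti_N=\V^\Betti_N\cdot\F^{m-1}\V^\Betti_N(X_1-1)=\F^{m-1}\V^\Betti_N(X_1-1)=\F^m\W^\Betti_N$.
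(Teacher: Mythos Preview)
Your argument is correct. The paper itself does not prove parts (a) and (b) at all---it simply cites \cite[Lemma 3.1.14]{Yad2}---and then observes that (c) is immediate from (b), exactly as you do. So there is no ``paper's own proof'' to compare against for the substantive parts; you have supplied a self-contained argument where the paper defers to an external reference.

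For the record, the ingredients you use are sound. Part (a) is a direct unpacking of the definitions together with $\K\cap\mathcal{I}_N=0$. For the nontrivial inclusion in (b), your reduction via the coset decomposition $\K F_2=\bigoplus_{a=0}^{N-1}\K K\,X_0^a$ and the identity $\mathcal{I}_N^m=\bigoplus_a\mathfrak{a}^m X_0^a$ is clean; the passage from $v(X_1-1)\in\mathcal{I}_N^m$ to $v_a(\xi_a-1)\in\mathfrak{a}^m$ for each $a$ is correct since $X_0^a(X_1-1)=(\xi_a-1)X_0^a$. The key free-group lemma (``$w(\gamma-1)\in\mathfrak{a}^m\Rightarrow w\in\mathfrak{a}^{m-1}$'') is handled properly via the Magnus identification $\gr_\mathfrak{a}(\K K)\cong\K\langle t_i\rangle$: your base-change justification (each $\mathfrak{a}_\Z^n/\mathfrak{a}_\Z^{n+1}$ is $\Z$-free, so the identification survives $-\otimes_\Z\K$) is the right one, and right multiplication by a single generator $t_\gamma$ is injective on the free algebra over any base ring because distinct words map to distinct words. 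The choice of maximal $j\le m-2$ with $w\in\mathfrak{a}^j$ is well-defined since $0$ is always in this set (for $m\ge 2$; the case $m=1$ is vacuous).
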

    \begin{proof}
        For \ref{FmWB_equality} and \ref{FmWB_to_FmVB}, see \cite[Lemma 3.1.14]{Yad2}. \ref{FmWB_is_VB_mod} follows immediately from \ref{FmWB_to_FmVB}.
    \end{proof}
    
    \begin{lemma} \label{lem:FmWBN}
        For $m \in \Z$, we have
        \[\F^m \W^\Betti_N =
        \begin{cases}
            \W^\Betti_N & \text{if } m \leq 0 \\
            \V^\Betti_N (X_1 - 1) & \text{if } m = 1 \\
            \displaystyle (X_0^N - 1)^{m-1} \K[X_0, X_0^{-1}] (X_1 - 1) + \sum_{k=1}^{m-1} \F^k \W^\Betti_N \cdot \F^{m-k} \W^\Betti_N & \text{if } m \geq 2
        \end{cases}\]
    \end{lemma}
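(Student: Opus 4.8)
The plan is to treat the three ranges of $m$ separately and, for $m \geq 2$, to prove both inclusions of the asserted equality, writing $T_m$ for its right-hand side. The cases $m \leq 0$ and $m=1$ are immediate: for $m \leq 0$ one has $\F^m\V^\Betti_N = \V^\Betti_N$, so $\F^m\W^\Betti_N = \W^\Betti_N$; and for $m=1$, Lemma \ref{lem:FmWB}\ref{FmWB_to_FmVB} gives $\F^1\W^\Betti_N = \F^0\V^\Betti_N(X_1-1) = \V^\Betti_N(X_1-1)$. The inclusion $T_m \subseteq \F^m\W^\Betti_N$ for $m \geq 2$ is routine: the generator term lies in $\mathcal{I}_N^{m-1}(X_1-1) = \F^m\W^\Betti_N$ because $(X_0^N-1)^{m-1} \in \mathcal{I}_N^{m-1}$ and the two-sided ideal $\mathcal{I}_N^{m-1}$ absorbs $\K[X_0,X_0^{-1}]$, while each product $\F^k\W^\Betti_N \cdot \F^{m-k}\W^\Betti_N$ lies in $\W^\Betti_N \cap \F^m\V^\Betti_N = \F^m\W^\Betti_N$ since the filtration is multiplicative and $\W^\Betti_N$ is a subalgebra. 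The substance of the lemma is the reverse inclusion $\F^m\W^\Betti_N \subseteq T_m$.

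For this I would argue by induction on $m \geq 2$, using repeatedly $\F^m\W^\Betti_N = \mathcal{I}_N \cdot \F^{m-1}\W^\Betti_N$ (Lemma \ref{lem:FmWB}\ref{FmWB_to_FmVB}) together with the facts that each $\F^j\W^\Betti_N$ is a left $\V^\Betti_N$-module (Lemma \ref{lem:FmWB}\ref{FmWB_is_VB_mod}), that $(X_0^N-1)\F^j\W^\Betti_N \subseteq \F^{j+1}\W^\Betti_N$, and the ``nesting'' observation that $\F^a\W^\Betti_N \cdot \F^b\W^\Betti_N \subseteq \sum_{k=1}^{m-1}\F^k\W^\Betti_N \cdot \F^{m-k}\W^\Betti_N$ whenever $a,b \geq 1$ and $a+b \geq m$ (take $k = \min(a,m-1)$); the latter also yields $T_{m+1} \subseteq T_m$. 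The structural input is the decomposition of the relative augmentation ideal
\[
    \mathcal{I}_N = \V^\Betti_N(X_0^N-1) + \V^\Betti_N(X_1-1)\,\K[X_0,X_0^{-1}],
\]
which follows from the free generators $X_0^N$ and $X_0^a X_1 X_0^{-a}$ of $\ker(F_2 \to \mu_N)$ after absorbing the units $X_0^{\pm a}$. Splitting $\F^m\W^\Betti_N = \mathcal{I}_N\F^{m-1}\W^\Betti_N$ accordingly, the $\V^\Betti_N(X_1-1)\K[X_0,X_0^{-1}]$-part is absorbed into $\F^1\W^\Betti_N \cdot \F^{m-1}\W^\Betti_N$ using the module structure, while the $\V^\Betti_N(X_0^N-1)$-part, after inserting the inductive hypothesis $\F^{m-1}\W^\Betti_N = T_{m-1}$, splits into products landing in $T_m$ (via $(X_0^N-1)\F^k\W^\Betti_N \subseteq \F^{k+1}\W^\Betti_N$ and nesting) plus the single genuinely problematic term $\V^\Betti_N(X_0^N-1)^{m-1}(X_1-1)$.

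The crux, and the step I expect to be the main obstacle, is therefore the inclusion $\V^\Betti_N(X_0^N-1)^{m-1}(X_1-1) \subseteq T_m$. The naive attempt of decomposing the leading coefficient as $\V^\Betti_N = \K[X_0,X_0^{-1}] + \mathcal{I}_N$ fails, because the $\mathcal{I}_N$-part regenerates a term $\V^\Betti_N(X_0^N-1)^{m}(X_1-1)$ with a higher power of $X_0^N-1$, producing an infinite descent that never terminates inside the (non-closed) set $T_m$. I would instead prove the sharper claim that $g(X_0^N-1)^{m-1}(X_1-1) \in T_m$ for every $g \in F_2$, by a secondary induction on the number of letters $X_1^{\pm 1}$ in the reduced word $g$. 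If $g$ is a power of $X_0$ the element is $(X_0^N-1)^{m-1}X_0^s(X_1-1)$, already in the generator part of $T_m$; otherwise write $g = X_0^a X_1^{\varepsilon} g'$ with $g'$ containing fewer letters $X_1^{\pm1}$ and split $X_1^{\varepsilon} = (X_1^{\varepsilon}-1) + 1$. The summand from $1$ is $X_0^a g'(X_0^N-1)^{m-1}(X_1-1)$, handled by the inner induction since $X_0^a g'$ has fewer $X_1$-letters; the summand from $(X_1^{\varepsilon}-1)$ factors as an element of $\F^1\W^\Betti_N$ (because $X_1^{\varepsilon}-1 \in \V^\Betti_N(X_1-1)$) times an element of $\F^m\W^\Betti_N$, hence lies in $\F^1\W^\Betti_N \cdot \F^m\W^\Betti_N \subseteq \F^1\W^\Betti_N \cdot \F^{m-1}\W^\Betti_N \subseteq T_m$ by nesting. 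This secondary induction is self-contained, using only $\F^m\W^\Betti_N = \mathcal{I}_N^{m-1}(X_1-1)$ and the definition of $T_m$ rather than the equality being proved, and its base case $m=2$ feeds the main induction. The delicate point is precisely this peeling step: isolating a single $(X_1-1)$ factor so that the remainder drops in $X_1$-complexity while the split-off part becomes a genuine product in $T_m$, thereby avoiding the spurious escalation of the power of $X_0^N-1$.
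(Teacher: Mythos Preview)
Your argument is correct, and it reaches the same conclusion by a genuinely different route from the paper.

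The paper does not induct on $m$. Instead it exploits a \emph{direct sum} decomposition
\[
  \mathcal{I}_N = (X_0^N-1)\K[X_0,X_0^{-1}] \oplus \V^\Betti_N(X_1-1)\V^\Betti_N,
\]
obtained from the splitting $\V^\Betti_N = \K[X_0,X_0^{-1}] \oplus \V^\Betti_N(X_1-1)\V^\Betti_N$ coming from the retraction $F_2 \to \Z$, $X_0 \mapsto 1$, $X_1 \mapsto 0$. It then expands $\mathcal{I}_N^{m-1}$ once and for all as a sum over all maps $\lambda:[\![1,m-1]\!]\to\{0,1\}$, isolates the $\lambda=\mathbf{0}$ term (producing the summand $(X_0^N-1)^{m-1}\K[X_0,X_0^{-1}](X_1-1)$), and reorganizes the remaining terms after right-multiplying by $(X_1-1)$ via the bijection between nonzero words and tuples of positions of $1$'s, so that each piece becomes a product $\F^{k_1}\W^\Betti_N\cdots\F^{k_j-k_{j-1}}\W^\Betti_N$ that collapses into $\sum_{k=1}^{m-1}\F^k\W^\Betti_N\cdot\F^{m-k}\W^\Betti_N$.

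You instead use the non-direct decomposition $\mathcal{I}_N = \V^\Betti_N(X_0^N-1) + \V^\Betti_N(X_1-1)\K[X_0,X_0^{-1}]$, induct on $m$, and handle the residual term $\V^\Betti_N(X_0^N-1)^{m-1}(X_1-1)$ by a secondary induction on the $X_1$-length of group elements. The paper's approach is more structural: the direct-sum splitting separates the ``pure $X_0$'' and ``contains $X_1$'' parts of $\mathcal{I}_N$ at the outset, so no word-combinatorics or nested induction is needed. Your approach is more elementary in that it never invokes the section $\Z \to F_2$ or the resulting direct sum; it trades that structural input for the peeling argument, which is the real content of your proof and is done correctly.
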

    \begin{proof}
        The result is immediate for $m = 0$; and for $m = 1$, it follows from Lemma~\ref{lem:FmWB}~\ref{FmWB_to_FmVB}. \linebreak
        We now consider the case $m \geq 2$. Since $(\F^n \W^\Betti_N)_{n \in \Z}$ is a decreasing algebra filtration, then
        \begin{equation} \label{eq:Fmsupset1}
            \F^m \W^\Betti_N \supset \sum_{k=1}^{m-1} \F^k \W^\Betti_N \cdot \F^{m-k} \W^\Betti_N.
        \end{equation}
        On the other hand, since $X_0^N-1$ and $X_1-1$ belong to $\mathcal{I}_N$, we obtain the inclusion in the following
        \begin{equation} \label{eq:Fmsubset2}
            \F^m \W^\Betti_N = \F^m \V^\Betti_N \cap \V^\Betti_N (X_1 - 1) \supset (X_0^N - 1)^{m-1} \K[X_0, X_0^{-1}] (X_1 - 1),
        \end{equation}
        and the equality follows from Lemma \ref{lem:FmWB} \ref{FmWB_equality}. From \eqref{eq:Fmsupset1} and \eqref{eq:Fmsubset2}, we obtain the following inclusion
        \[
            \F^m \W^\Betti_N \supset (X_0^N - 1)^{m-1} \K[X_0, X_0^{-1}] (X_1 - 1) + \sum_{k=1}^{m-1} \F^k \W^\Betti_N \cdot \F^{m-k} \W^\Betti_N.
        \]
        Let us now prove the converse.
        The group morphism $F_2 \to \Z$ given by $X_0 \mapsto 1$ and $X_1 \mapsto 0$ admits a section given by $1 \mapsto X_0$. Then $\K{F_2}$ is the direct sum of the image of the section $\K\Z \to \K{F_2}$, which is $\K[X_0, X_0^{-1}]$, and of the kernel of $\K{F_2} \to \K\Z$, which is the two-sided ideal of $\K{F_2}$ generated by $X_1-1$. Let us denote by $\V^\Betti_N (X_1 - 1) \V^\Betti_N$ this ideal\footnote{recall that the algebras $\V^\Betti_N$ and $\K{F_2}$ are equal. In the sequel, we use the former rather that the latter notation for denoting the two-sided ideal generated by $X_1 -1$.}. \newline
        We derive the direct sum decomposition\footnote{where the first summand is a subalgebra of and the second summand is a two-sided ideal}
        \begin{equation*}
            \V^\Betti_N = \K[X_0, X_0^{-1}] \oplus \V^\Betti_N (X_1 - 1) \V^\Betti_N.
        \end{equation*}
        Moreover, since $\V^\Betti_N (X_1 - 1) \V^\Betti_N \subset \mathcal{I}_N = \ker(\K{F_2} \to \K{\mu_N})$, we have 
        \[
            \mathcal{I}_N = \ker\left(\K[X_0, X_0^{-1}] \to \K{\mu_N}\right) \oplus \V^\Betti_N (X_1 - 1) \V^\Betti_N,
        \]
        where $\K[X_0, X_0^{-1}] \to \K{\mu_N}$ is the restriction of $\K{F_2} \to \K{\mu_N}$ to $\K[X_0, X_0^{-1}]$. Therefore,
        \begin{equation} \label{eq:IN_direct_sum}
            \mathcal{I}_N = (X_0^N-1) \K[X_0, X_0^{-1}] \oplus \V^\Betti_N (X_1 - 1) \V^\Betti_N.
        \end{equation}
        Denote by $\mathcal{A}_0 = (X_0^N-1) \K[X_0, X_0^{-1}]$ and $\mathcal{A}_1 = \V^\Betti_N (X_1 - 1) \V^\Betti_N$. Thanks to \eqref{eq:IN_direct_sum}, we obtain
        \begin{equation} \label{eq:INm_direct_sum}
            \mathcal{I}_N^{m-1} = \sum_{\lambda : [\![1,m-1]\!] \to \{0,1\}} \mathcal{A}_{\lambda(1)} \cdots \mathcal{A}_{\lambda(m-1)} = \mathcal{A}_0^{m-1} + \sum_{\substack{\lambda : [\![1,m-1]\!] \to \{0,1\} \\ \lambda \neq \mathbf{0}}} \mathcal{A}_{\lambda(1)} \cdots \mathcal{A}_{\lambda(m-1)},  
        \end{equation}
        where $\mathbf{0} : [\![1,m-1]\!] \to \{0,1\}$ is the zero map. \newline
        Set $X(0):= X_0^N$ and $X(1):= X_1$. Since $\mathcal{A}_i \subset \V^\Betti_N (X(i) - 1) \V^\Betti_N$ (for $i \in \{0, 1\}$), it follows that for any map $\lambda : [\![1,m-1]\!] \to \{0,1\}$, we have
        \begin{equation} \label{eq:prodAlambda_inclusion}
            \mathcal{A}_{\lambda(1)} \cdots \mathcal{A}_{\lambda(m-1)} \subset \V^\Betti_N (X(\lambda(1)) - 1) \V^\Betti_N \cdots \V^\Betti_N (X(\lambda(m-1)) - 1) \V^\Betti_N.
        \end{equation}
        Combining equality \eqref{eq:INm_direct_sum}, inclusion \eqref{eq:prodAlambda_inclusion} for $\lambda \neq \mathbf{0}$,  and the equality $\mathcal{A}_0^{m-1} = (X_0^N-1)^{m-1} \K[X_0, X_0^{-1}]$, we obtain
        \[
            \mathcal{I}_N^{m-1} \subset (X(0) - 1)^{m-1} \K[X_0, X_0^{-1}] + \sum_{\substack{\lambda : [\![1,m-1]\!] \to \{0,1\} \\ \lambda \neq \mathbf{0}}} \V^\Betti_N (X(\lambda(1)) - 1) \V^\Betti_N \cdots \V^\Betti_N (X(\lambda(m-1)) - 1) \V^\Betti_N.
        \]
        Since $X(i) - 1 \in \mathcal{I}_N$ (for $i \in \{0, 1\}$), the right hand side of this inclusion is contained in $\mathcal{I}_N^{m-1}$, therefore
        \begin{equation} \label{eq:INm_equality}
            \mathcal{I}_N^{m-1} = (X(0) - 1)^{m-1} \K[X_0, X_0^{-1}] + \sum_{\substack{\lambda : [\![1,m-1]\!] \to \{0,1\} \\ \lambda \neq \mathbf{0}}} \V^\Betti_N (X(\lambda(1)) - 1) \V^\Betti_N \cdots \V^\Betti_N (X(\lambda(m-1)) - 1) \V^\Betti_N.
        \end{equation}
        Finally,
        \begin{align*}
            \F^m\W^\Betti_N = & \mathcal{I}_N^{m-1} (X(1) - 1) \\[2mm]
            = & (X(0) - 1)^{m-1} \K[X_0, X_0^{-1}] (X(1) - 1) \\
            & + \sum_{\substack{\lambda : [\![1,m-1]\!] \to \{0,1\} \\ \lambda \neq \mathbf{0}}} \V^\Betti_N (X(\lambda(1)) - 1) \cdots \V^\Betti_N (X(\lambda(m-1)) - 1) \V^\Betti_N (X(1) - 1) \\[2mm]
            = & (X(0) - 1)^{m-1} \K[X_0, X_0^{-1}] (X(1) - 1) \\
            & + \sum_{\lambda \in \Lambda_m} \V^\Betti_N (X(\lambda(1)) - 1) \cdots \V^\Betti_N (X(\lambda(m-1)) - 1) \V^\Betti_N (X(\lambda(m)) - 1) \\[2mm]
            = & (X(0) - 1)^{m-1} \K[X_0, X_0^{-1}] (X(1) - 1) \\
            & + \sum_{j \geq 2} \sum_{(k_1, \dots, k_j) \in \mathcal{K}^{(j)}_m} \left(\V^\Betti_N (X(0) - 1)\right)^{k_1-1} \V^\Betti_N (X(1) - 1) \left(\V^\Betti_N (X(0) - 1)\right)^{k_2-k_1-1} \\[-3mm]
            & \hspace{32mm} \V^\Betti_N (X(1) - 1) \cdots \left(\V^\Betti_N (X(0) - 1)\right)^{k_j-k_{j-1}-1} \V^\Betti_N (X(1) - 1) \\[2mm]
            \subset & (X(0) - 1)^{m-1} \K[X_0, X_0^{-1}] (X(1) - 1) \\
            & + \sum_{j \geq 2} \sum_{(k_1, \dots k_j) \in \mathcal{K}^{(j)}_m} \F^{k_1} \W^\Betti_N \cdot \F^{k_2-k_1} \W^\Betti_N \cdots \F^{k_j-k_{j-1}} \W^\Betti_N \\[2mm] 
            \subset & (X(0) - 1)^{m-1} \K[X_0, X_0^{-1}] (X(1) - 1) + \sum_{k = 1}^{m-1} \F^k \W^\Betti_N \cdot \F^{m-k} \W^\Betti_N,
        \end{align*}
        where the first equality follows from Lemma \ref{lem:FmWB} \ref{FmWB_to_FmVB} and the second one from \eqref{eq:INm_equality}. In the third equality one denotes
        \[
            \Lambda_m := \{\lambda : [\![1,m]\!] \to \{0,1\} \mid \lambda(m) = 1, \ \lambda_{|[\![1,m-1]\!]} \neq \mathbf{0}\}
        \]
        and the equality then follows immediately. In the fourth equality one denotes 
        \[
            \mathcal{K}^{(j)}_m := \{ (k_1, \dots, k_j) \mid 1 \leq k_1 < \cdots < k_{j-1} < k_j = m \},
        \]
        one also uses Convention \ref{convention_star} for the definition of $\left(\V^\Betti_N (X(0) - 1)\right)^k$ (for any integer $k \geq 1$); and the equality is induced by the bijection
        \[
            \Lambda_m \simeq \bigsqcup_{j \geq 2} \mathcal{K}^{(j)}_m, \quad \lambda \mapsto \lambda^{-1}(\{0\}).
        \]
        The first inclusion follows from the fact $(\V^\Betti_N (X(0) - 1))^{k-1} \V^\Betti_N (X(1) - 1) \subset \F^k \W^\Betti_N$ (for any integer $k \geq 1$); and the last inclusion from the fact that $(\F^m \W^\Betti_N)_{m \in \Z}$ is a decreasing filtration and therefore
        \[
            \F^{k_2-k_1} \W^\Betti_N \cdot \cdots \cdot \F^{k_j-k_{j-1}} \W^\Betti_N \subset \F^{k_j-k_1} \W^\Betti_N = \F^{m-k_1} \W^\Betti_N.
        \]
    \end{proof}
    
    \begin{lemma} \label{lem:DeltaWNFmWBN}
        For any integer $m \geq 2$, we have
        \[
            \Delta^{\W, \Betti}\left((X_0^N - 1)^{m-1} \K[X_0, X_0^{-1}] (X_1 - 1)\right) \subset \F^m (\W^\Betti_N \otimes \W^\Betti_N) 
        \]
    \end{lemma}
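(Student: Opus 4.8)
The plan is to reduce to a single Laurent monomial, compute the coproduct explicitly, and control the resulting truncated convolution by a finite-difference calculus in the $X_0$-exponent. By $\K$-linearity of $\Delta^{\W, \Betti}$ it suffices to treat $v:=(X_0^N-1)^{m-1}X_0^a(X_1-1)$ for a fixed $a\in\Z$. Setting $w_n:=X_0^n(X_1-1)$ and expanding the binomial, $v=\sum_{i=0}^{m-1}\binom{m-1}{i}(-1)^{m-1-i}w_{Ni+a}$. Rewriting the coproduct formula as $\Delta^{\W, \Betti}(w_n)=w_n\otimes1+1\otimes w_n-\sum_{k=1}^{n-1}w_k\otimes w_{n-k}$ and applying it termwise yields
\[
\Delta^{\W, \Betti}(v)=v\otimes1+1\otimes v-S,\qquad S:=\sum_{i=0}^{m-1}\binom{m-1}{i}(-1)^{m-1-i}C(Ni+a),
\]
where $C(n):=\sum_{k=1}^{n-1}w_k\otimes w_{n-k}$. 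As $v\in\F^m\W^\Betti_N$, the two boundary terms lie in $\F^m\W^\Betti_N\otimes\F^0\W^\Betti_N$ and $\F^0\W^\Betti_N\otimes\F^m\W^\Betti_N$, hence in $\F^m(\W^\Betti_N\otimes\W^\Betti_N)$; so the problem reduces to showing $S\in\F^m(\W^\Betti_N\otimes\W^\Betti_N)$.

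I would then recognize $S=\nabla_N^{\,m-1}[C](a)$, where $\nabla_N f(n):=f(n+N)-f(n)$ denotes the step-$N$ finite difference. The key mechanism is the identity $w_{j+N}-w_j=(X_0^N-1)w_j$, whence $\nabla_N^{\,s}w_j=(X_0^N-1)^s w_j$; moreover, since $X_0^N-1,X_1-1\in\mathcal{I}_N$, one has $(X_0^N-1)^p X_0^c(X_1-1)\in\F^{p+1}\W^\Betti_N$ for all $p\ge0$, $c\in\Z$ (cf. \eqref{eq:Fmsubset2} and Lemma~\ref{lem:FmWB}\ref{FmWB_equality}), so each factor $X_0^N-1$ raises the filtration degree by one. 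To push this through the truncated convolution, set $C^{(r)}(n):=\sum_{k=1}^{n-1}w_k\otimes(X_0^N-1)^r w_{n-k}$; splitting $\sum_{k=1}^{n+N-1}=\sum_{k=1}^{n-1}+\sum_{k=n}^{n+N-1}$ and using $w_{(n-k)+N}=w_{n-k}+(X_0^N-1)w_{n-k}$ on the first part gives the recursion
\[
\nabla_N C^{(r)}=C^{(r+1)}+R^{(r)},\qquad R^{(r)}(n):=\sum_{l=0}^{N-1}w_{n+l}\otimes(X_0^N-1)^r w_{N-l}.
\]

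A short induction on $s$ then gives $\nabla_N^{\,s}[C^{(0)}]=C^{(s)}+\sum_{r=0}^{s-1}\nabla_N^{\,s-1-r}R^{(r)}$; taking $s=m-1$, evaluating at $a$, and using $\nabla_N^{\,s}R^{(r)}(n)=\sum_{l=0}^{N-1}(X_0^N-1)^s w_{n+l}\otimes(X_0^N-1)^r w_{N-l}$, one obtains
\[
S=\sum_{k=1}^{a-1}w_k\otimes(X_0^N-1)^{m-1}w_{a-k}+\sum_{r=0}^{m-2}\sum_{l=0}^{N-1}(X_0^N-1)^{m-2-r}w_{a+l}\otimes(X_0^N-1)^r w_{N-l}.
\]
The filtration count is now immediate from the membership recalled above: in the first sum the left factor lies in $\F^1\W^\Betti_N$ and the right one in $\F^m\W^\Betti_N$, while in the double sum the left factor lies in $\F^{m-1-r}\W^\Betti_N$ and the right one in $\F^{r+1}\W^\Betti_N$. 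In every case the two filtration degrees sum to at least $m$, so each summand belongs to $\F^m(\W^\Betti_N\otimes\W^\Betti_N)$, and therefore so do $S$ and $\Delta^{\W, \Betti}(v)$.

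The step I expect to be the main obstacle is combinatorial rather than conceptual: justifying the range splittings and reindexings of the truncated sums for all $n\in\Z$ under the bidirectional summation convention (including the negative ranges), which is exactly where the boundary terms $R^{(r)}$ originate and must be bookkept correctly. Once the recursion $\nabla_N C^{(r)}=C^{(r+1)}+R^{(r)}$ is established, the closed form and the filtration estimate follow routinely.
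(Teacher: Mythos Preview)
Your proof is correct and takes a genuinely different route from the paper's. The paper treats an arbitrary $P(X_0,X_0^{-1})$ at once by writing the cross-term of the coproduct via the divided difference $\dfrac{(X_0^N-1)^{m-1}P(X_0)Y_0-(Y_0^N-1)^{m-1}P(Y_0)X_0}{X_0-Y_0}$ and then splitting it into three pieces $A,B,C$; it shows $A,B$ land in $\F^{m+1}$ and $C$ in $\F^m$. You instead reduce to monomials and organize the same computation as an iterated step-$N$ finite difference of the truncated convolution $C(n)=\sum_{k=1}^{n-1}w_k\otimes w_{n-k}$, obtaining the recursion $\nabla_N C^{(r)}=C^{(r+1)}+R^{(r)}$ and a closed form for $S$. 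Your first sum $\sum_{k=1}^{a-1}w_k\otimes(X_0^N-1)^{m-1}w_{a-k}$ plays the role of the paper's $A$ and $B$ terms (both land in $\F^{m+1}$), and your double sum $\sum_{r,l}(X_0^N-1)^{m-2-r}w_{a+l}\otimes(X_0^N-1)^r w_{N-l}$ corresponds to the paper's $C$ term (which is exactly $\F^m$). The paper's divided-difference formulation is reused verbatim when computing $\gr(\Delta^{\W,\Betti})$ in Theorem~\ref{thm:maintheorem2}, so its decomposition is tailored to that application; your finite-difference calculus is more elementary and gives a more explicit expansion in the basis $w_n$, at the cost of being specific to monomial $P$. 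The concern you flag about the bidirectional summation convention is not a genuine obstacle: the convention satisfies the additivity $\sum_{k=p}^{q}+\sum_{k=q+1}^{r}=\sum_{k=p}^{r}$ for all $p,q,r\in\Z$ and is $\K$-linear in the summand, which is exactly what your range splitting and termwise subtraction use, and for $N\ge1$ the boundary range $\sum_{k=n}^{n+N-1}$ is always an ordinary $N$-term sum.
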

    \begin{proof}
        Let $P(X_0, X_0^{-1}) \in \K[X_0, X_0^{-1}]$. We have
        \begin{align}
            &\Delta^{\W, \Betti}\big((X_0^N - 1)^{m-1} P(X_0, X_0^{-1}) (X_1 - 1)\big) \label{eq:DeltaWBettielt1}\\
            & = (X_0^N - 1)^{m-1} P(X_0, X_0^{-1}) (X_1 - 1) + (Y_0^N - 1)^{m-1} P(Y_0, Y_0^{-1}) (Y_1 - 1) \notag \\
            & - \frac{(X_0^N - 1)^{m-1} P(X_0, X_0^{-1}) Y_0 - (Y_0^N - 1)^{m-1} P(Y_0, Y_0^{-1}) X_0}{X_0 - Y_0} (X_1 - 1) (Y_1 - 1), \notag
        \end{align}
        where $\frac{(X_0^N - 1)^{m-1} P(X_0, X_0^{-1}) Y_0 - (Y_0^N - 1)^{m-1} P(Y_0, Y_0^{-1}) X_0}{X_0 - Y_0}$ is the polynomial $F(X_0, X_0^{-1}, Y_0, Y_0^{-1}) \in \K[X_0, X_0^{-1}, Y_0, Y_0^{-1}]$ such that
        \[
            (X_0 - Y_0) F(X_0, X_0^{-1}, Y_0, Y_0^{-1}) = (X_0^N - 1)^{m-1} P(X_0, X_0^{-1}) Y_0 - (Y_0^N - 1)^{m-1} P(Y_0, Y_0^{-1}) X_0.
        \]
        Next, we have
        \begin{align*}
            & \frac{(X_0^N - 1)^{m-1} P(X_0, X_0^{-1}) Y_0 - (Y_0^N - 1)^{m-1} P(Y_0, Y_0^{-1}) X_0}{X_0 - Y_0} = - (X_0^N - 1)^{m-1} P(X_0, X_0^{-1}) \\
            & - (Y_0^N - 1)^{m-1} P(Y_0, Y_0^{-1}) + \frac{(X_0^N - 1)^{m-1} P(X_0, X_0^{-1}) X_0 - (Y_0^N - 1)^{m-1} P(Y_0, Y_0^{-1}) Y_0}{X_0 - Y_0} \\[1em]
            & = - (X_0^N - 1)^{m-1} P(X_0, X_0^{-1}) - (Y_0^N - 1)^{m-1} P(Y_0, Y_0^{-1}) \\[1pt]
            & + \frac{\mbox{\small$(X_0^N - 1)^{m-1} \big(P(X_0, X_0^{-1}) X_0 - P(Y_0, Y_0^{-1}) Y_0\big)$}}{X_0 - Y_0} + \frac{\mbox{\small$\left((X_0^N - 1)^{m-1} - (Y_0^N - 1)^{m-1}\right) P(Y_0, Y_0^{-1}) Y_0$}}{X_0 - Y_0}
        \end{align*}
        Denote by
        \begin{align*}
            A(X_0, X_0^{-1}, Y_0, Y_0^{-1}) & := - (X_0^N - 1)^{m-1} P(X_0, X_0^{-1}) - (Y_0^N - 1)^{m-1} P(Y_0, Y_0^{-1}), \\[1em]
            B(X_0, X_0^{-1}, Y_0, Y_0^{-1}) & := \frac{(X_0^N - 1)^{m-1} \big(P(X_0, X_0^{-1}) X_0 - P(Y_0, Y_0^{-1}) Y_0\big)}{X_0 - Y_0}, \\[1em]
            C(X_0, X_0^{-1}, Y_0, Y_0^{-1}) & := \frac{\left((X_0^N - 1)^{m-1} - (Y_0^N - 1)^{m-1}\right) P(Y_0, Y_0^{-1}) Y_0}{X_0 - Y_0}.
        \end{align*}
        Thanks to this, we obtain from equality \eqref{eq:DeltaWBettielt1} the following identity
        \begin{align}
            & \Delta^{\W, \Betti}\big((X_0^N - 1)^{m-1} P(X_0, X_0^{-1}) (X_1 - 1)\big) = (X_0^N - 1)^{m-1} P(X_0, X_0^{-1}) (X_1 - 1) \label{eq:DeltaWBelt} \\
            & + (Y_0^N - 1)^{m-1} P(Y_0, Y_0^{-1}) (Y_1 - 1) - A(X_0, X_0^{-1}, Y_0, Y_0^{-1}) (X_1 - 1) (Y_1 -1) \notag \\
            & - B(X_0, X_0^{-1}, Y_0, Y_0^{-1}) (X_1 - 1) (Y_1 -1) - C(X_0, X_0^{-1}, Y_0, Y_0^{-1}) (X_1 - 1) (Y_1 -1). \notag
        \end{align}
        Since $X_0^N-1, X_1-1 \in \mathcal{I}_N$, we have
        \begin{equation} \label{eq:elt_of_FmWBN}
            (X_0^N - 1)^{m-1} P(X_0, X_0^{-1}) (X_1 - 1) \in \F^m \V^\Betti_N \cap \W^\Betti = \F^m \W^\Betti_N,
        \end{equation}
        Then the statement \eqref{eq:elt_of_FmWBN} implies that
        \[
            (X_0^N - 1)^{m-1} P(X_0, X_0^{-1}) (X_1 - 1) \in \F^m \W^\Betti_N \otimes 1 \subset \F^m (\W^\Betti_N \otimes \W^\Betti_N),
        \]
        and
        \[
            (Y_0^N - 1)^{m-1} P(Y_0, Y_0^{-1}) (Y_1 - 1) \in 1 \otimes \F^m \W^\Betti_N \subset \F^m (\W^\Betti_N \otimes \W^\Betti_N).
        \]
        On the other hand, we have
        \begin{align}
            A(X_0, X_0^{-1}, Y_0, Y_0^{-1}) (X_1 - 1) (Y_1 -1) & = - P(X_0, X_0^{-1}) (X_0^N - 1)^{m-1} (X_1 - 1) (Y_1 -1) \label{A_in_Fm1} \\
            & - P(Y_0, Y_0^{-1}) (Y_0^N - 1)^{m-1} (Y_1 -1) (X_1 - 1) \notag \\
            & \in \F^{m+1} \left(\W^\Betti_N \otimes \W^\Betti_N\right), \notag
        \end{align}
        where the ``$\in$'' claim follows from the fact that $(X_0^N - 1)^{m-1} (X_1 - 1) (Y_1 -1) \in \F^m \W^\Betti_N \otimes \F^1\W^\Betti$ and that $\F^m \W^\Betti_N \otimes \F^1\W^\Betti$ is a left $(\V^\Betti_N \otimes \V^\Betti_N)$-module, which implies
        \[
            - P(X_0, X_0^{-1}) (X_0^N - 1)^{m-1} (X_1 - 1) (Y_1 -1) \in \F^m \W^\Betti_N \otimes \F^1\W^\Betti.
        \]
        Swapping between $X$ and $Y$ enables us to apply the same argument to show that
        \[
            - P(Y_0, Y_0^{-1}) (Y_0^N - 1)^{m-1} (Y_1 -1) (X_1 - 1) \in \F^1\W^\Betti \otimes \F^m \W^\Betti_N.
        \]
        Moreover, we have
        \begin{align}
            & B(X_0, X_0^{-1}, Y_0, Y_0^{-1}) (X_1 - 1) (Y_1 -1) \label{B_in_Fm1} \\
            & = \frac{P(X_0, X_0^{-1}) X_0 - P(Y_0, Y_0^{-1}) Y_0}{X_0 - Y_0} (X_0^N - 1)^{m-1} (X_1 - 1) (Y_1 -1) \notag \\
            & \in \F^m \W^\Betti_N \otimes \F^1 \W^\Betti_N \subset \F^{m+1} \left(\W^\Betti_N \otimes \W^\Betti_N\right), \notag
        \end{align}
        where the ``$\in$'' claim follows from the fact that $(X_0^N - 1)^{m-1} (X_1 - 1) (Y_1 -1) \in \F^m \W^\Betti_N \otimes \F^1\W^\Betti$ and that $\F^m \W^\Betti_N \otimes \F^1\W^\Betti$ is a left $(\V^\Betti_N \otimes \V^\Betti_N)$-module. \newline
        Moreover, we have
        \begin{align}
            & C(X_0, X_0^{-1}, Y_0, Y_0^{-1}) (X_1 - 1) (Y_1 -1) \label{C_in_Fm} \\
            & = P(Y_0, Y_0^{-1}) Y_0 \frac{X_0^N - Y_0^N}{X_0 - Y_0} \frac{(X_0^N - 1)^{m-1} - (Y_0^N - 1)^{m-1}}{X_0^N - Y_0^N} (X_1 - 1) (Y_1 -1) \notag \\
            & = P(Y_0, Y_0^{-1}) Y_0 \left(\sum_{k=0}^{N-1} X_0^k Y_0^{N-1-k}\right) \sum_{l=0}^{m-2} \underbrace{(X_0^N -1)^l (X_1 - 1)}_{\in \F^{l+1} \W^\Betti_N \otimes 1} \underbrace{(Y_0^N -1)^{m-2-l} (Y_1 - 1)}_{\in 1 \otimes \F^{m-1-l} \W^\Betti_N} \notag \\
            & \in \F^m (\W^\Betti_N \otimes \W^\Betti_N). \notag
        \end{align}
        Therefore, it follows from identity \eqref{eq:DeltaWBelt} that
        \begin{equation*}
            \Delta^{\W, \Betti}\big((X_0^N - 1)^{m-1} P(X_0, X_0^{-1}) (X_1 - 1)\big) \in \F^m (\W^\Betti_N \otimes \W^\Betti_N). 
        \end{equation*}
    \end{proof}

    \begin{proof}[Proof of Theorem \ref{thm:main_theorem1}]
        If $m \leq 0$, the result is immediate. Let us assume that $m \geq 1$. We will proceed with the proof by induction on $m$. \newline
        For $m=1$, denote by $\varepsilon : \W^\Betti_N \to \K$ the counit of the bialgebra $(\W^\Betti_N, \Delta^{\W, \Betti})$. We have
        \[
            \Delta^{\W, \Betti}(\F^1\W^\Betti_N) = \Delta^{\W, \Betti}(\ker(\varepsilon)) \subset \ker(\varepsilon \otimes \varepsilon) = \F^1 (\W^\Betti_N \otimes \W^\Betti_N),  
        \]
        where the first equality follows from the identity $\F^1 \W^\Betti_N = \ker(\varepsilon)$; the second equality from the counit identity $\Delta^{\W, \Betti} \circ \varepsilon = (\varepsilon \otimes \varepsilon) \circ \Delta^{\W, \Betti}$; and the third equality from the identity \linebreak $\F^1 (\W^\Betti_N \otimes \W^\Betti_N) = \ker(\varepsilon \otimes \varepsilon)$. \newline
        Suppose now that the statement is true until $m-1$. We have
        \begin{align*}
            & \Delta^{\W, \Betti}(\F^m\W^\Betti_N) = \Delta^{\W, \Betti}\left((X_0^N - 1)^{m-1} \K[X_0, X_0^{-1}] (X_1 - 1) + \sum_{k=1}^{m-1} \F^k \W^\Betti_N \cdot \F^{m-k} \W^\Betti_N\right) \\
            & \subset \Delta^{\W, \Betti}\left((X_0^N - 1)^{m-1} \K[X_0, X_0^{-1}] (X_1 - 1)\right) + \sum_{k=1}^{m-1} \Delta^{\W, \Betti}\left(\F^k \W^\Betti_N \right) \cdot \Delta^{\W, \Betti}\left(\F^{m-k} \W^\Betti_N\right) \\
            & \subset \F^m (\W^\Betti_N \otimes \W^\Betti_N) + \sum_{k=1}^{m-1} \F^k (\W^\Betti_N \otimes \W^\Betti_N) \cdot \F^{m-k} (\W^\Betti_N \otimes \W^\Betti_N) \\
            & \subset \F^m (\W^\Betti_N \otimes \W^\Betti_N),
        \end{align*}
        where the equality follows from Lemma \ref{lem:FmWBN}, the first inclusion follows by linearity of $\Delta^{\W, \Betti}$ and compatibility with the product; the second inclusion from Lemma \ref{lem:DeltaWNFmWBN} and induction hypothesis; and the last inclusion from the fact that $\left(\F^m (\W^\Betti_N \otimes \W^\Betti_N)\right)_{m \in \Z}$ is an algebra filtration, which follows from the fact that $(\F^m\W^\Betti_N)_{m \in \Z}$ is an algebra filtration.
    \end{proof}

    \section{Computation of \texorpdfstring{$\gr(\Delta^{\W, \Betti})$}{gr(DeltaWB)}}
    \noindent In this section, we prove Theorem \ref{thm:maintheorem2}. 
    \begin{proof}[Proof of Theorem \ref{thm:maintheorem2}]
        Let us prove that diagram \eqref{diag:maintheorem2} of graded algebra morphisms commutes for any degree $m \geq 1$. \newline
        For $a \in [\![0, N-1]\!]$, $z_{m, \zeta_N^a}$ is a degree $m$ element of $\W^\DeRham_N$ and we have
        \begin{equation} \label{eq:rho_z}
            \rho_N^\W(z_{m, \zeta_N^a}) = [(X_0^N-1)^{m-1} X_0^a (1-X_1)]_m.
        \end{equation}
        Recall that
        \[
            \Delta^{\W, \DeRham}_N (z_{m, \zeta_N^a}) = z_{m, \zeta_N^a} \otimes 1 + 1 \otimes z_{m, \zeta_N^a} + \sum_{\substack{1 \leq k \leq m-1\\ 0 \leq b \leq N-1}} z_{k, \zeta_N^b} \otimes z_{m-k, \zeta_N^{a-b}}.
        \]
        Therefore, we obtain
        \begin{multline*}
            \hspace{-4mm}\left(\rho_N^\W \otimes \rho_N^\W\right) \circ \Delta^{\W, \DeRham}_N (z_{m, \zeta_N^a}) \hfill \\
            = \Big[(X_0^N-1)^{m-1} X_0^a (1 - X_1) + (Y_0^N-1)^{m-1} Y_0^a (1 - Y_1) \hfill \\ \hfill + \sum_{\substack{1 \leq k \leq m-1\\ 0 \leq b \leq N-1}} (X_0^N-1)^{k-1} X_0^b (1 - X_1) (Y_0^N-1)^{m-k-1} Y_0^{a-b} (1 - Y_1)\Big]_m
        \end{multline*}
        On the other hand, by taking $P(X_0, X_0^{-1}) = X_0^a$ in \eqref{eq:DeltaWBelt}, we obtain that
        \begin{multline*}
            \Delta^{\W, \Betti}\left((X_0^N-1)^{m-1} X_0^a (1-X_1)\right) = (X_0^N - 1)^{m-1} X_0^a (1 - X_1) + (Y_0^N - 1)^{m-1} Y_0^a (1- Y_1) \hfill \\
            \hfill + \widetilde{A}(X_0, Y_0) (1 - X_1) (1 - Y_1) + \widetilde{B}(X_0, Y_0) (1 - X_1) (1 - Y_1) + \widetilde{C}(X_0, Y_0) (1 - X_1) (1 - Y_1),
        \end{multline*}
        where
        \begin{align*}
            \widetilde{A}(X_0, Y_0) & := - (X_0^N - 1)^{m-1} X_0^a - (Y_0^N - 1)^{m-1} Y_0^a, \\[1em]
            \widetilde{B}(X_0, Y_0) & := (X_0^N - 1)^{m-1}\frac{X_0^{a+1} - Y_0^{a+1}}{X_0 - Y_0}, \\[1em]
            \widetilde{C}(X_0, Y_0) & := \frac{(X_0^N - 1)^{m-1} - (Y_0^N - 1)^{m-1}}{X_0 - Y_0} Y_0^{a+1}.
        \end{align*}
        Thanks to \eqref{A_in_Fm1}, \eqref{B_in_Fm1} and \eqref{C_in_Fm}, it follows that
        \begin{align*}
            \widetilde{A}(X_0, Y_0) (1 - X_1) (1 - Y_1) \in \F^{m+1} (\W^\Betti_N \otimes \W^\Betti_N), \\[1em]
            \widetilde{B}(X_0, Y_0) (1 - X_1) (1 - Y_1) \in \F^{m+1} (\W^\Betti_N \otimes \W^\Betti_N), \\[1em]
            \widetilde{C}(X_0, Y_0) (1 - X_1) (1 - Y_1) \in \F^m (\W^\Betti_N \otimes \W^\Betti_N).
        \end{align*}
        Therefore, thanks to equality \eqref{eq:rho_z}, we obtain
        \begin{align*}
            & \gr(\Delta^{\W, \Betti}) \circ \rho_N^\W(z_{m, \zeta_N^a}) \\
            & = \left[(X_0^N-1)^{m-1} X_0^a (1 - X_1) + (Y_0^N-1)^{m-1} Y_0^a (1 - Y_1) + \widetilde{C}(X_0, Y_0) (1 - X_1) (1 - Y_1)\right]_m.
        \end{align*}
        One checks that
        \begin{align*}
            \widetilde{C}(X_0, Y_0) & = \left(\sum_{k=1}^{m-1} (X_0^N-1)^{k-1} (Y_0^N-1)^{m-k-1} \right) \left(\sum_{b=0}^{N-1} X_0^b Y_0^{N-1-b} \right) Y_0^{a+1} \\
            & = \sum_{\substack{1 \leq k \leq m-1\\ 0 \leq b \leq N-1}} (X_0^N-1)^{k-1} X_0^b (Y_0^N-1)^{m-k-1} Y_0^{N+a-b}.
        \end{align*}
        Finally,
        \begin{multline*}
            \hspace{-3mm}\gr(\Delta^{\W, \Betti}) \circ \rho_N^\W(z_{m, \zeta_N^a}) \hfill \\
            = \Big[(X_0^N-1)^{m-1} X_0^a (1 - X_1) + (Y_0^N-1)^{m-1} Y_0^a (1 - Y_1) \hfill \\ \hfill + \sum_{\substack{1 \leq k \leq m-1\\ 0 \leq b \leq N-1}} (X_0^N-1)^{k-1} X_0^b (1 - X_1) (Y_0^N-1)^{m-k-1} Y_0^{a-b} (1 - Y_1)\Big]_m.
        \end{multline*}
        This concludes the proof.
    \end{proof}

    \begin{acknowledgments}
        This project was partially supported by first author's ANR grant Project HighAGT ANR20-CE40-0016 and second author's JSPS KAKENHI Grant 23KF0230.
    \end{acknowledgments}
    
    \bibliographystyle{abstract}
    \bibliography{main}
\end{document}